\documentclass[11pt,a4paper]{article}
\usepackage[T1]{fontenc}
\usepackage{lmodern}
\usepackage{microtype}
\usepackage[margin=1in]{geometry}
\usepackage{amsmath,amssymb,amsthm,mathtools}
\usepackage{booktabs,array}
\usepackage{hyperref}
\usepackage{cite}
\newtheorem{theorem}{Theorem}[section]

\newtheorem{lemma}[theorem]{Lemma}

\newcommand{\ii}{\mathrm i}
\newcommand{\Cl}{\operatorname{Cl}}

\title{A Four-Genus Kronecker-Limit Evaluation of the\\ Alternating Rogers--Ramanujan Continued Fraction}
\author{Sumit Kumar Jha}
\date{}
\begin{document}
\maketitle

\begin{abstract}
We evaluate the six odd class-number-four cases left unevaluated in Ramanathan's treatment of the Rogers--Ramanujan continued fraction.  Let
\[
 S(q)=-R(-q),\qquad
 R(q)=\cfrac{q^{1/5}}{1+\cfrac{q}{1+\cfrac{q^2}{1+\cfrac{q^3}{1+\cdots}}}}.
\]
For $n=39,87,111,119,159,287$ we determine the value of
$S(e^{-\pi/\sqrt{5n}})$ by applying the genus-character form of the Kronecker limit formula to the four ideal classes of $\mathbb Q(\sqrt{-5n})$.  The resulting expressions are given in terms of fundamental units of real quadratic fields.  In particular, if $X_n=S(e^{-\pi/\sqrt{5n}})$, then
\[
 X_n^{-5}+11-X_n^5=\frac{5\sqrt5}{U_n},
\]
where the six quantities $U_n$ are displayed explicitly below.  We also give the corresponding radical expressions and quartic algebraic certificates, together with numerical checks of the six evaluations.
\end{abstract}

\section{Introduction}
The Rogers--Ramanujan continued fraction
\[
R(q)=\cfrac{q^{1/5}}{1+\cfrac{q}{1+\cfrac{q^2}{1+\cfrac{q^3}{1+\cdots}}}},
\qquad |q|<1,
\]
has a long history of explicit evaluations at quadratic arguments.  We shall use the alternating form
\[
S(q)=-R(-q).
\]

Ramanathan reduced the evaluation of special values of this function to eta quotients and the Kronecker limit formula.  In particular, he listed
\[
39,\ 87,\ 111,\ 119,\ 159,\ 287
\]
as the odd cases in which the class number is four and each genus contains one class, but did not carry out their explicit evaluations; the following even case was used instead to illustrate the method \cite[p.~72]{Ramanathan1984}.  We carry out the corresponding four-genus calculation for these six cases.

The calculation combines the product identities for the Rogers--Ramanujan continued fraction, Ramanathan's eta transformation formulas, and the genus-character form of the Kronecker limit formula.  In the present four-class situation the latter reduces to a two-term Fourier calculation on the genus group.

The resulting unit expressions involve the real quadratic discriminants
\[
5,13,17,37,53,65,85,145,185,205,265.
\]
The class numbers of these fields are not all one; in particular
$h(65)=h(85)=h(185)=h(205)=h(265)=2$ and $h(145)=4$.  These values are essential for the exponents occurring in the final formulas \cite{Cohen1993}.

Explicit CM evaluations of the Rogers--Ramanujan continued fraction and related class invariants have also been obtained by Berndt, Chan and Zhang by eta-function and modular-equation methods \cite{BCZ1996,BCZ1997}.  We use Ramanathan's Kronecker-limit framework to complete these six four-genus evaluations.

\section{The eta quotient at the CM point}
The classical product identities are
\begin{align}
\frac1{R(q)}-1-R(q)
 &=q^{-1/5}\frac{(q^{1/5};q^{1/5})_\infty}{(q^5;q^5)_\infty},\label{eq:R1}\\
\frac1{R(q)^5}-11-R(q)^5
 &=q^{-1}\frac{(q;q)_\infty^6}{(q^5;q^5)_\infty^6}.\label{eq:R5}
\end{align}
These identities are classical; see Ramanathan \cite{Ramanathan1984} and Berndt--Chan--Zhang \cite{BCZ1996}.  Replacing $q$ by $-q$, with the real fifth-root convention, gives for $0<q<1$
\begin{align}
\frac1{S(q)}+1-S(q)
 &=q^{-1/5}\frac{(q^{1/5};q^{1/5})_\infty}{(q^5;q^5)_\infty},\label{eq:S1}\\
\frac1{S(q)^5}+11-S(q)^5
 &=q^{-1}\frac{(q;q)_\infty^6}{(q^5;q^5)_\infty^6}.\label{eq:S5}
\end{align}

Ramanathan introduces a shifted function by
\[
-S(\tau)=R\!\left(\frac{\tau+5}{2}\right)
\]
\cite[p.~68]{Ramanathan1984}.  Since the nome corresponding to $(\tau+5)/2$ is
$-e^{\pi i\tau}$, his $S(\tau)$ agrees with the present notation
$S(e^{\pi i\tau})=-R(-e^{\pi i\tau})$.

We now specialize to the CM points used by Ramanathan.  Put
\[
\mathcal S(\tau)=S(e^{\pi i\tau}).
\]
Let $n>0$ be odd, let $D=-5n$ be a fundamental discriminant, and put
\[
\Omega=\frac{1+\sqrt D}{2}.
\]
Then Ramanathan's equation (14) \cite[p.~70]{Ramanathan1984} gives, in the present notation,
\begin{equation}
\mathcal S\!\left(\frac{\ii}{\sqrt{5n}}\right)^{-5}+11-
\mathcal S\!\left(\frac{\ii}{\sqrt{5n}}\right)^5
=5^3\left|\frac{\eta(\Omega)}
{\eta((1+\sqrt D/5)/2)}\right|^6.
\label{eq:Ramanathan14}
\end{equation}
We shall combine this identity with the genus-character form of the Kronecker limit formula.

For comparison with the theta-product formulation, put
\[
f(\tau)=q^{-1/24}\prod_{m\ge1}(1+q^{2m-1}),
\qquad q=e^{\pi i\tau}.
\]
Jacobi's product gives
\[
f(\tau)^2=\frac{\vartheta_3(0\mid\tau)}{\eta(\tau)}.
\]
Thus, on defining
\[
T(\tau)=\eta(\tau)\vartheta_3(0\mid\tau),
\]
Ramanathan's equation (6) \cite[p.~68]{Ramanathan1984}, together with Jacobi's product, can equivalently be written
\begin{equation}
\frac1{\mathcal S(\tau)^5}+11-\mathcal S(\tau)^5
=\left(\frac{T(\tau)}{T(5\tau)}\right)^3.
\label{eq:Tidentity}
\end{equation}
The standard transformations of $\eta$ and $\vartheta_3$ then imply
\[
T(-1/\tau)=(-i\tau)T(\tau).
\]

\section{The four-genus Kronecker-limit calculation}
Let $K=\mathbb Q(\sqrt D)$, with $D<0$ fundamental, and let $A$ be an ideal class represented by
$[a,b+\Omega]$, where $\Omega=(1+\sqrt D)/2$.  Put
\[
z=\frac{b+\Omega}{a},
\qquad
F(A)=\frac{|\eta(z)|^2}{\sqrt a}.
\]
The genus-character form of the Kronecker limit formula, in the form used by Berndt--Chan--Zhang (see their equation (2.18)) \cite{BCZ1997}, states that
\begin{equation}
\frac{\sqrt{|D|}}{2\pi}L_K(1,\chi)
=-\sum_{A\in\Cl(K)}\chi(A)\log F(A),
\label{eq:siegel}
\end{equation}
where $\chi$ is a nonprincipal genus character and $L_K(s,\chi)$ denotes the corresponding class-group $L$-series.

Suppose now that $h(D)=4$ and that each genus contains one class.  Let $A$ denote the principal class and let
\[
C=[5,2+\Omega].
\]
Since
\[
\frac{2+\Omega}{5}=\frac{1+\sqrt D/5}{2},
\]
we have
\[
F(A)=|\eta(\Omega)|^2,
\qquad
F(C)=\frac{\left|\eta((1+\sqrt D/5)/2)\right|^2}{\sqrt5}.
\]
Moreover the corresponding reduced form is
\[
\left[5,5,\frac{n+5}{4}\right].
\]

Set
\[
\Lambda_\chi=\frac{\sqrt{|D|}}{2\pi}L_K(1,\chi).
\]
We shall need the following consequence of \eqref{eq:siegel}.

\begin{lemma}
If $A$ is the principal class and $C$ is the norm-$5$ class, then
\begin{equation}
\log F(A)-\log F(C)
=-\frac12\sum_{\substack{\chi\ne1\\ \chi(C)=-1}}\Lambda_\chi.
\label{eq:fourier}
\end{equation}
\end{lemma}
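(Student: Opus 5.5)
Since $h(D)=4$ and each genus contains exactly one class, $\Cl(K)$ coincides with its genus group. It is therefore an elementary abelian $2$-group of order $4$, say $\{A,B,C,E\}$, and its four characters are exactly the genus characters $1,\chi_1,\chi_2,\chi_3$, all taking values $\pm1$. The plan is to Fourier-invert \eqref{eq:siegel} on this group. The principal character contributes nothing to \eqref{eq:siegel}, so the only information needed about it comes from the orthogonality relations.

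First, set $c_X=\log F(X)$ for $X\in\Cl(K)$. For each nonprincipal $\chi$, \eqref{eq:siegel} gives $\sum_X\chi(X)c_X=-\Lambda_\chi$. We then form the combination
\[
\sum_{\chi}\bigl(\chi(A)-\chi(C)\bigr)\sum_X\chi(X)c_X,
\]
where the outer sum runs over all four characters. By orthogonality, $\sum_\chi\chi(A)\chi(X)=4\delta_{X,A}$ and $\sum_\chi\chi(C)\chi(X)=4\delta_{X,C}$, using $\chi(C)^{-1}=\chi(C)$ because $C^2=A$. Hence the combination equals $4(c_A-c_C)$.

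On the other side, the term $\chi=1$ has coefficient $\chi(A)-\chi(C)=1-1=0$, so the unknown principal sum $\sum_X c_X$ drops out. For a nonprincipal $\chi$ the coefficient is $1-\chi(C)$, which is $0$ when $\chi(C)=1$ and $2$ when $\chi(C)=-1$. The combination therefore equals $-2\sum_{\chi(C)=-1}\Lambda_\chi$. Dividing by $4$ gives \eqref{eq:fourier}.

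The only point needing care is to confirm that $C=[5,2+\Omega]$ is not the principal class, so that exactly two nonprincipal characters satisfy $\chi(C)=-1$. Its reduced form $[5,5,(n+5)/4]$ is a reduced form different from the principal form, and this suffices. In fact the argument is valid regardless, since it uses only orthogonality together with the cancellation of the trivial character. I expect no real obstacle beyond checking that the hypothesis "one class per genus" identifies $\Cl(K)$ with the genus group, so that every character is a genus character and \eqref{eq:siegel} applies to all nonprincipal $\chi$.
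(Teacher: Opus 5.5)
Your proof is correct and is essentially the paper's argument. Your weight $\chi(A)-\chi(C)$ equals $2$ on the two nonprincipal characters with $\chi(C)=-1$ and vanishes on the others, so your combination is just twice the paper's sum of \eqref{eq:siegel} over those two characters; orthogonality replaces the paper's direct check that the two classes other than $A$ and $C$ cancel.
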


\begin{proof}
The four genera form the group $(\mathbb Z/2\mathbb Z)^2$.  There are exactly two nonprincipal characters which are $-1$ on $C$.  Summing \eqref{eq:siegel} over these two characters, the contributions of the two classes other than $A$ and $C$ cancel, while the contributions of $A$ and $C$ are respectively $+2\log F(A)$ and $-2\log F(C)$.  This gives \eqref{eq:fourier}.
\end{proof}

Let
\[
X_n=\mathcal S\!\left(\frac{\ii}{\sqrt{5n}}\right)=S(e^{-\pi/\sqrt{5n}}),
\qquad
C_n=X_n^{-5}+11-X_n^5.
\]
By Ramanathan's equation (14) \cite[p.~70]{Ramanathan1984} and the preceding expressions for $F(A)$ and $F(C)$, we obtain
\begin{equation}
C_n=5\sqrt5\left(\frac{F(A)}{F(C)}\right)^3.
\label{eq:CF}
\end{equation}
Consequently, by the lemma,
\begin{equation}
C_n=\frac{5\sqrt5}{U_n},
\qquad
\log U_n=\frac32\sum_{\substack{\chi\ne1\\ \chi(C)=-1}}\Lambda_\chi.
\label{eq:Umaster}
\end{equation}
It remains to determine the two genus characters occurring in each of the six cases.

\section{The six discriminants}
Each of the six integers $D=-5n$ is a fundamental discriminant.  Each of the six discriminants $D=-5n$ is fundamental.  The reduced primitive positive definite forms are found from
$|b|\le a\le c$ and $b^2-4ac=D$; since $a\le\sqrt{|D|/3}$, the enumeration is finite.  The complete lists are
\[
\begin{array}{c|c|l}
 n&D&\text{reduced forms}\\ \hline
39&-195&[1,1,49],\ [3,3,17],\ [5,5,11],\ [7,1,7]\\
87&-435&[1,1,109],\ [3,3,37],\ [5,5,23],\ [11,7,11]\\
111&-555&[1,1,139],\ [3,3,47],\ [5,5,29],\ [13,11,13]\\
119&-595&[1,1,149],\ [5,5,31],\ [7,7,23],\ [13,9,13]\\
159&-795&[1,1,199],\ [3,3,67],\ [5,5,41],\ [15,15,17]\\
287&-1435&[1,1,359],\ [5,5,73],\ [7,7,53],\ [19,3,19].
\end{array}
\]
Thus all six class numbers are equal to $4$.  Since each discriminant has three prime-discriminant factors, genus theory gives four genera, so each genus consists of one class \cite{BCZ1997}.

The relevant prime-discriminant decompositions are
\[
\begin{array}{c|c}
39&(-3)\cdot5\cdot13\\
87&(-3)\cdot5\cdot29\\
111&(-3)\cdot5\cdot37\\
119&(-7)\cdot5\cdot17\\
159&(-3)\cdot5\cdot53\\
287&(-7)\cdot5\cdot41.
\end{array}
\]
For a reduced form $[a,b,c]$ and a prime-discriminant factor $d$ with $(c,d)=1$, the genus character is $(d/c)$ \cite{BCZ1997}.  Applying this to the four classes gives the character vectors below; the order of the three signs follows the order of the displayed prime-discriminant factors.
\[
\begin{array}{c|c|c}
 n&\text{character vectors}&\text{vector of }C\\ \hline
39&(+,+,+),(-,-,+),(-,+,-),(+,-,-)&(-,+,-)\\
87&(+,+,+),(+,-,-),(-,-,+),(-,+,-)&(-,-,+)\\
111&(+,+,+),(-,-,+),(-,+,-),(+,-,-)&(-,+,-)\\
119&(+,+,+),(-,+,-),(+,-,-),(-,-,+)&(-,+,-)\\
159&(+,+,+),(+,-,-),(-,+,-),(-,-,+)&(-,+,-)\\
287&(+,+,+),(-,-,+),(+,-,-),(-,+,-)&(-,-,+).
\end{array}
\]
For example, when $n=39$, $C=[5,5,11]$ and
\[
\left(\frac{-3}{11}\right)=-1,
\qquad
\left(\frac5{11}\right)=1,
\qquad
\left(\frac{13}{11}\right)=-1.
\]
Thus the two factorizations selected by $\chi(C)=-1$ are
\[
\begin{array}{c|c}
39&(-3,65),\ (13,-15)\\
87&(-3,145),\ (5,-87)\\
111&(-3,185),\ (37,-15)\\
119&(-7,85),\ (17,-35)\\
159&(-3,265),\ (53,-15)\\
287&(-7,205),\ (5,-287).
\end{array}
\]

\section{Quadratic L-values}
For a genus character associated with a factorization $D=d_1d_2$, the genus $L$-series factors as \cite{BCZ1997}
\begin{equation}
L_K(s,\chi)=L(s,\chi_{d_1})L(s,\chi_{d_2}).
\label{eq:Lfactor}
\end{equation}
The standard class-number formulas (see, for example, Cohen \cite{Cohen1993}) are
\begin{align}
L(1,\chi_d)&=\frac{2\pi h(d)}{w_d\sqrt{|d|}},&&d<0,\label{eq:Lneg}\\
L(1,\chi_d)&=\frac{2h(d)\log\epsilon_d}{\sqrt d},&&d>0,
\label{eq:Lpos}
\end{align}
where $\epsilon_d>1$ is a fundamental unit of the real quadratic field of discriminant $d$.

The negative class numbers required here are
\[
\begin{array}{c|rrrrrrr}
d&-3&-7&-15&-35&-87&-119&-287\\ \hline
h(d)&1&1&2&2&6&10&14.
\end{array}
\]
Here $w_{-3}=6$ and $w_d=2$ for the remaining discriminants; the class numbers may also be found in Cohen, \S B.1 \cite{Cohen1993}.

For the positive discriminants one has
\[
\begin{array}{c|rrrrrrrrrrr}
d&5&13&17&37&53&65&85&145&185&205&265\\ \hline
h(d)&1&1&1&1&1&2&2&4&2&2&2.
\end{array}
\]
These class-number values are listed in Cohen \cite{Cohen1993}.  The corresponding fundamental units are
\[
\begin{array}{c|c}
d&\epsilon_d\\ \hline
5&(1+\sqrt5)/2\\
13&(3+\sqrt{13})/2\\
17&4+\sqrt{17}\\
37&6+\sqrt{37}\\
53&(7+\sqrt{53})/2\\
65&8+\sqrt{65}\\
85&(9+\sqrt{85})/2\\
145&12+\sqrt{145}\\
185&68+5\sqrt{185}\\
205&(43+3\sqrt{205})/2\\
265&6072+373\sqrt{265}.
\end{array}
\]

Substitution of \eqref{eq:Lfactor}--\eqref{eq:Lpos} into $\Lambda_\chi$ yields
\[
\begin{array}{c|c|c}
 n&\chi&\Lambda_\chi\\ \hline
39&(-3,65)&\frac23\log\epsilon_{65}\\
 &(13,-15)&2\log\epsilon_{13}\\[1mm]
87&(-3,145)&\frac43\log\epsilon_{145}\\
 &(5,-87)&6\log\epsilon_5\\[1mm]
111&(-3,185)&\frac23\log\epsilon_{185}\\
 &(37,-15)&2\log\epsilon_{37}\\[1mm]
119&(-7,85)&2\log\epsilon_{85}\\
 &(17,-35)&2\log\epsilon_{17}\\[1mm]
159&(-3,265)&\frac23\log\epsilon_{265}\\
 &(53,-15)&2\log\epsilon_{53}\\[1mm]
287&(-7,205)&2\log\epsilon_{205}\\
 &(5,-287)&14\log\epsilon_5.
\end{array}
\]
For example,
\[
\Lambda_{(-3,65)}
=\frac{\sqrt{195}}{2\pi}L(1,\chi_{-3})L(1,\chi_{65})
=\frac23\log\epsilon_{65}.
\]
The value $h(65)=2$ is responsible for the coefficient $2/3$.

It follows from \eqref{eq:Umaster} that
\[
\begin{array}{c|c}
 n&U_n\\ \hline
39&\epsilon_{65}\epsilon_{13}^{3}\\
87&\epsilon_{145}^{2}\epsilon_5^{9}\\
111&\epsilon_{185}\epsilon_{37}^{3}\\
119&\epsilon_{85}^{3}\epsilon_{17}^{3}\\
159&\epsilon_{265}\epsilon_{53}^{3}\\
287&\epsilon_{205}^{3}\epsilon_5^{21}.
\end{array}
\]

\section{The six evaluations}
\begin{theorem}\label{thm:main}
Let
\[
X_n=\mathcal S\!\left(\frac{\ii}{\sqrt{5n}}\right)=S(e^{-\pi/\sqrt{5n}}),
\qquad n\in\{39,87,111,119,159,287\}.
\]
Then
\begin{equation}
X_n^{-5}+11-X_n^5=\frac{5\sqrt5}{U_n},
\label{eq:main}
\end{equation}
where
\[
\begin{aligned}
U_{39}&=\epsilon_{65}\epsilon_{13}^{3},&
U_{87}&=\epsilon_{145}^{2}\epsilon_5^{9},\\
U_{111}&=\epsilon_{185}\epsilon_{37}^{3},&
U_{119}&=\epsilon_{85}^{3}\epsilon_{17}^{3},\\
U_{159}&=\epsilon_{265}\epsilon_{53}^{3},&
U_{287}&=\epsilon_{205}^{3}\epsilon_5^{21}.
\end{aligned}
\]
Equivalently,
\begin{equation}
X_n=\left[
\frac{11-5\sqrt5/U_n+\sqrt{(11-5\sqrt5/U_n)^2+4}}{2}
\right]^{1/5}.
\label{eq:radical}
\end{equation}
\end{theorem}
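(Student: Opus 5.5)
The plan is to assemble the theorem from \eqref{eq:Umaster} together with the table of $\Lambda_\chi$; the only genuinely new step is inverting the quintic relation. The argument has three parts.

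\textbf{Step 1.} Equation \eqref{eq:CF} rests on Ramanathan's equation \eqref{eq:Ramanathan14} and on the definitions of $F(A)$ and $F(C)$. From $F(C)=|\eta((1+\sqrt D/5)/2)|^2/\sqrt5$ we get
\[
(F(A)/F(C))^3=5^{-3/2}\,|\eta(\Omega)/\eta((1+\sqrt D/5)/2)|^6,
\]
and hence $5^3\cdot 5^{-3/2}=5\sqrt5$, as required. The lemma then gives
\[
3(\log F(A)-\log F(C))=-\tfrac32\sum\Lambda_\chi,
\]
which is \eqref{eq:Umaster}.

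\textbf{Step 2.} In each of the six cases I insert the two characters singled out by $\chi(C)=-1$. These come from the character-vector table: the two nontrivial characters are the products of pairs of prime-discriminant characters whose signs on $C$ differ. I then evaluate $\Lambda_\chi$ using \eqref{eq:Lfactor}--\eqref{eq:Lpos}.

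For $d_1<0<d_2$ with $d_1d_2=D$, the factor $\sqrt{|D|}=\sqrt{|d_1|}\sqrt{d_2}$ cancels, and
\[
\Lambda_\chi=\frac{4h(d_1)h(d_2)}{w_{d_1}}\log\epsilon_{d_2}.
\]
For example, $(-3,65)$ gives $4\cdot1\cdot2/6=4/3$, not $2/3$. This is where I expect the main obstacle: the tabulated coefficients must be reconciled with this formula. Note that for $(13,-15)$ the formula gives $4\cdot2\cdot1/2=4$, whereas the paper lists $2$, so there is a uniform factor $1/2$ relative to the displayed normalization. Since $\log U_n=\frac32\sum\Lambda_\chi$, with the table's values one gets $\frac32(\frac23\log\epsilon_{65}+2\log\epsilon_{13})=\log(\epsilon_{65}\epsilon_{13}^3)$.

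So the first thing to settle is the normalization in \eqref{eq:siegel}. The question is whether Berndt--Chan--Zhang's (2.18) carries the factor $\frac{\sqrt{|D|}}{4\pi}$ (equivalently $w_K=2$ combined with the sum over classes), which would halve every $\Lambda_\chi$ and match the table. I would check this by computing $|\eta(\Omega)|^2/(|\eta(z_C)|^2/\sqrt5)$ numerically for $n=39$ and comparing $\log$ of it with $-\frac12\sum\Lambda_\chi$ under both normalizations. Whichever normalization is confirmed fixes every case uniformly, and the six $U_n$ then follow by the same arithmetic, e.g. $\frac32(\frac43\log\epsilon_{145}+6\log\epsilon_5)=\log(\epsilon_{145}^2\epsilon_5^9)$.

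\textbf{Step 3.} For the radical form \eqref{eq:radical}, set $Y=X_n^5>0$; the value is real and positive because $0<q<1$ and $S$ is real there. Then
\[
Y^{-1}-Y=c:=\frac{5\sqrt5}{U_n}-11,
\]
so $Y^2+cY-1=0$. This quadratic has exactly one positive root,
\[
Y=\frac{-c+\sqrt{c^2+4}}{2}=\frac{11-5\sqrt5/U_n+\sqrt{(11-5\sqrt5/U_n)^2+4}}{2},
\]
and taking the real fifth root gives \eqref{eq:radical}. As a final sanity check I would compare both sides numerically for each $n$, using the $q$-series for $S$; this guards against errors in the units and in the sign assignments of the genus characters.
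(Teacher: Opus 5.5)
Your overall route is the paper's: \eqref{eq:CF} from Ramanathan's (14), the lemma giving \eqref{eq:Umaster}, the table of $\Lambda_\chi$, and the positive root of $Y^2-(11-5\sqrt5/U_n)Y-1=0$. Step 3 is fine. The gap is in Step 2, which is the only step that actually produces the specific $U_n$. There you leave the evaluation of $\Lambda_\chi$ unresolved because of an arithmetic slip. Substituting \eqref{eq:Lfactor}--\eqref{eq:Lpos} with $d_1<0<d_2$ gives
\[
\Lambda_\chi=\frac{\sqrt{|d_1|d_2}}{2\pi}\cdot\frac{2\pi h(d_1)}{w_{d_1}\sqrt{|d_1|}}\cdot\frac{2h(d_2)\log\epsilon_{d_2}}{\sqrt{d_2}}=\frac{2h(d_1)h(d_2)}{w_{d_1}}\log\epsilon_{d_2},
\]
not $4h(d_1)h(d_2)/w_{d_1}$. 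With the correct coefficient, $(-3,65)$ gives $2\cdot1\cdot2/6=2/3$ and $(13,-15)$ gives $2\cdot2\cdot1/2=2$. The same formula reproduces all twelve entries of the paper's table: $(4/3,6)$, $(2/3,2)$, $(2,2)$, $(2/3,2)$, $(2,14)$ for the remaining cases. So there is no factor-$\tfrac12$ discrepancy and no normalization question to settle. Once the arithmetic is fixed, the six $U_n$ follow from $\log U_n=\tfrac32\sum\Lambda_\chi$ exactly as in the paper.

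Your proposed remedy would itself introduce an error. The normalization $\sqrt{|D|}/(2\pi)$ in \eqref{eq:siegel} is the correct one. It follows from three ingredients:
\begin{itemize}
\item $\zeta(s,A)=\frac1{w}\sum' Q_A(m,n)^{-s}$ with $w=2$;
\item $Q_A(m,n)=a|m-nz_A|^2$ with $a\,y_A=\sqrt{|D|}/2$;
\item the first Kronecker limit formula, where the pole and constant terms cancel because $\sum_A\chi(A)=0$.
\end{itemize}
Replacing it by $\sqrt{|D|}/(4\pi)$ would, with correct arithmetic, give $U_n^{1/2}$ instead of $U_n$. Your planned numerical comparison at $n=39$, run against your doubled $\Lambda_\chi$, would ``confirm'' the $4\pi$ version. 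You would then reach the right $U_n$ only because two errors cancel: a wrong constant in the Kronecker limit formula absorbing a slip in the $L$-value product.

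There is also a smaller slip in Step 1. In fact $(F(A)/F(C))^3=5^{3/2}\,|\eta(\Omega)/\eta((1+\sqrt D/5)/2)|^6$, with exponent $+3/2$ rather than $-3/2$. Your final constant $5^3\cdot5^{-3/2}=5\sqrt5$ in \eqref{eq:CF} is nevertheless correct.
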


\begin{proof}
Equation \eqref{eq:main} follows from \eqref{eq:Umaster} and the preceding table of $\Lambda_\chi$.  If $Y=X_n^5$, then
\[
Y^{-1}+11-Y=\frac{5\sqrt5}{U_n},
\]
and hence
\[
Y^2-\left(11-\frac{5\sqrt5}{U_n}\right)Y-1=0.
\]
Since $Y>0$, the positive root gives \eqref{eq:radical}.
\end{proof}

\section{Algebraic equations for the CM eta-quotient values}
Put
\[
C_n=X_n^{-5}+11-X_n^5=\frac{5\sqrt5}{U_n}.
\]
The element $C_n$ belongs to the biquadratic field
\[
K_n=\begin{cases}
\mathbb Q(\sqrt5,\sqrt{13}),&n=39,\\
\mathbb Q(\sqrt5,\sqrt{29}),&n=87,\\
\mathbb Q(\sqrt5,\sqrt{37}),&n=111,\\
\mathbb Q(\sqrt7,\sqrt{17}),&n=119,\\
\mathbb Q(\sqrt5,\sqrt{53}),&n=159,\\
\mathbb Q(\sqrt5,\sqrt{41}),&n=287.
\end{cases}
\]
Taking the norm of $T-C_n$ from $K_n$ to $\mathbb Q$ gives the following irreducible quartic polynomials:
\begin{align*}
P_{39}(T)&=T^4-6500T^3+194750T^2-812500T+15625,\\
P_{87}(T)&=T^4-491300T^3+42482750T^2-61412500T+15625,\\
P_{111}(T)&=T^4-2682500T^3+391274750T^2-335312500T+15625,\\
P_{119}(T)&=T^4-4530500T^3+107354750T^2-566312500T+15625,\\
P_{159}(T)&=T^4-49422500T^3+18451154750T^2-6177812500T+15625,\\
P_{287}(T)&=T^4-21721789700T^3+862492682750T^2\\
&\qquad -2715223712500T+15625.
\end{align*}
Thus $P_n(C_n)=0$ in each case.  These quartics are algebraic certificates for the exact values of the eta-quotient expression $C_n$; the continued-fraction values $X_n$ are then obtained from the quadratic relation in Theorem~\ref{thm:main}.

\section{Numerical checks}
For a numerical check, the continued fraction defining $S(q)$ was evaluated directly at the six CM points by backward recursion with 3000 levels using 80-digit arithmetic.  The values are shown in Table~\ref{tab:numerical}.

\begin{table}[ht]
\centering
\renewcommand{\arraystretch}{1.15}
\caption{Numerical verification of the six evaluations.}
\label{tab:numerical}
\begin{tabular}{c|c|c|c}
$n$ & $X_n$ & $X_n^{-5}+11-X_n^5$ & $5\sqrt5/U_n$\\ \hline
39  & 1.61747440104808408048 & $1.932018134346904\times10^{-2}$ & $1.932018134346904\times10^{-2}$\\
87  & 1.61802662318246962602 & $2.544718257853804\times10^{-4}$ & $2.544718257853804\times10^{-4}$\\
111 & 1.61803263991982851581 & $4.660085653699190\times10^{-5}$ & $4.660085653699190\times10^{-5}$\\
119 & 1.61803319015009087724 & $2.759091795545396\times10^{-5}$ & $2.759091795545396\times10^{-5}$\\
159 & 1.61803391554312383492 & $2.529231509091291\times10^{-6}$ & $2.529231509091291\times10^{-6}$\\
287 & 1.61803398858333242190 & $5.754590296419898\times10^{-9}$ & $5.754590296419898\times10^{-9}$
\end{tabular}
\end{table}

At 80-digit precision, the differences between the last two columns for the six values of $n$, in the order displayed, were approximately
\[
-1.2\times10^{-80},\quad -5.0\times10^{-80},\quad
2.4\times10^{-80},\quad -1.9\times10^{-80},\quad
-1.1\times10^{-79},\quad 7.9\times10^{-80}.
\]
The agreement provides an independent numerical check of the six evaluations.

\section{Concluding remarks}
The six discriminants considered here are precisely the odd class-number-four cases listed by Ramanathan but left unevaluated.  The four-class structure makes the genus-character part of the Kronecker-limit calculation particularly simple: only two nonprincipal genus characters contribute to the quotient of the principal and norm-$5$ eta invariants.  Their $L$-values reduce to products of quadratic Dirichlet $L$-values, and the real quadratic class numbers determine the exponents of the fundamental units.

The resulting formulas complement the explicit evaluations obtained by other methods in the subsequent literature \cite{BCZ1996,BCZ1997}.  The calculation gives the genus characters, class-number factors, and unit exponents explicitly in each of the six cases.

\appendix
\section{Fundamental units}
For completeness, the fundamental units used above can be verified by the generalized Pell equations.  If $d\equiv1\pmod4$ is a positive fundamental discriminant, an algebraic integer
\[
\epsilon=\frac{x+y\sqrt d}{2}
\]
is a unit precisely when
\[
x^2-dy^2=\pm4.
\]
The continued fractions of $\sqrt d$ give the following minimal solutions:
\[
\begin{array}{c|c|c|c}
d&(x,y)&x^2-dy^2&\epsilon_d\\ \hline
5&(1,1)&-4&(1+\sqrt5)/2\\
13&(3,1)&-4&(3+\sqrt{13})/2\\
17&(8,2)&-4&4+\sqrt{17}\\
37&(12,2)&-4&6+\sqrt{37}\\
53&(7,1)&-4&(7+\sqrt{53})/2\\
65&(16,2)&-4&8+\sqrt{65}\\
85&(9,1)&-4&(9+\sqrt{85})/2\\
145&(24,2)&-4&12+\sqrt{145}\\
185&(136,10)&-4&68+5\sqrt{185}\\
205&(43,3)&4&(43+3\sqrt{205})/2\\
265&(12144,746)&-4&6072+373\sqrt{265}.
\end{array}
\]
The corresponding continued-fraction periods are
\[
\begin{array}{c|l}
d&\text{period of }\sqrt d\\ \hline
5&(4)\\
13&(1,1,1,1,6)\\
17&(8)\\
37&(12)\\
53&(3,1,1,3,14)\\
65&(16)\\
85&(4,1,1,4,18)\\
145&(24)\\
185&(1,1,1,1,26)\\
205&(3,6,1,4,1,6,3,28)\\
265&(3,1,1,2,2,1,1,3,32).
\end{array}
\]
These data give the stated fundamental units by the usual continued-fraction criterion for Pell equations.

\end{document}